\def\Re{\mathbb{R}}
\def\notes#1{\marginpar{\tiny #1}\typeout{Notes!
Notes!
Notes!
}}
\renewcommand{\notes}[1]{\typeout{notes!}}
\def\Re{\field{R}}
\def\Expect{{\mathbb E}}
\newtheorem{theorem}{Theorem}
\newtheorem{lemma}{Lemma}
\newtheorem{remark}{Remark}
\newtheorem{proposition}{Proposition}
\newtheorem{assumption}{Assumption}
\def\beq{\begin{eqnarray}} 
\def\bc{\begin{center}} 
\def\be{\begin{enumerate}}
\def\bi{\begin{itemize}} 
\def\bs{\begin{small}}
\def\bS{\begin{slide}}
\def\ec{\end{center}} 
\def\ee{\end{enumerate}}
\def\ei{\end{itemize}}
\def\es{\end{small}}
\def\eS{\end{slide}}
\def\eeq{\end{eqnarray}}
\newcommand{\newP}[1]{\noindent{\bf #1:}}
\def\Re{\mathbb{R}}
\renewcommand{\Re}{\mathbb{R}}
\newcounter{rmnum}
\newcounter{anum}
\title{\LARGE \bf
	Data-Driven Approximation of Stationary Nonlinear Filters \\with Optimal Transport Maps
}
\author{Mohammad Al-Jarrah$^\star$, Bamdad Hosseini$^\dagger$, Amirhossein Taghvaei$^\star$
\thanks{Mohammad Al-Jarrah and Amirhossein Taghvaei are supported by the National Science Foundation (NSF) award EPCN-2318977. Bamdad Hosseini is supported by the NSF award DMS-2208535}
    {\thanks{$^\star$Department of Aeronautics \& Astronautics, University of Washington, Seattle; {\tt\small mohd9485@uw.edu,amirtag@uw.edu}.}}
	% \thanks{A. Taghvaei is with the Department of Aeronautics \& Astronautics, University of Washington, Seattle {\tt\small amirtag@uw.edu}} 
    {\thanks{$^\dagger$Department of Applied Mathematics, University of Washington, Seattle
        {\tt\small bamdadh@uw.edu}.}}
}
\begin{document}

      \maketitle
	 \thispagestyle{empty}
	 \pagestyle{empty}

	%%%%%%%%%%%%%%%%%%%%%%%%%%%%%%%%%%%%%%%%%%%%%%%%%%%%%%%%%%%%%%%%%%%%%%%%%%%%%%%%

	 \begin{abstract}
          The nonlinear filtering problem is concerned with finding the conditional probability distribution (posterior) of the state of a stochastic dynamical system, given a history of partial and noisy observations. This paper presents a data-driven nonlinear filtering algorithm for the case when the state and observation processes are stationary. The posterior is approximated as the push-forward of an optimal transport (OT) map from a given distribution, that is easy to sample from, to the posterior conditioned on a truncated observation window. The OT map is obtained as the solution to a stochastic optimization problem that is solved offline using recorded trajectory data from the state and observations. An error analysis of the algorithm is presented under the stationarity and filter stability assumptions, which decomposes the error into two parts related to the truncation window during training and the error due to the optimization procedure. The performance of the proposed method, referred to as optimal transport data-driven filter (OT-DDF), is evaluated for several numerical examples, highlighting its significant computational efficiency during the online stage while maintaining the flexibility and accuracy of OT methods in nonlinear filtering. 

	\end{abstract}

    \section{Introduction}
    %%%
A  nonlinear filtering problem consists of two processes: (i) a hidden Markov process $\{X_1,X_2,\ldots\} $ that represents the state of a dynamical system; and (ii) an observed random process $\{Y_1,Y_2,\ldots\}$ that represents the noisy sensory measurements of the state. The job of a nonlinear filter is to numerically approximate the posterior distribution, i.e. the conditional probability distribution of the state $X_t$ given a history of noisy measurements $\{Y_t,Y_{t-1},\ldots, Y_1\}$, for $t=1,2,\ldots$. The exact posterior admits a recursive update law that facilitates the design of nonlinear filtering algorithms~\cite{cappe2009inference}. Denoting the posterior at time $t$ by $\pi_t$, the recursive update law may be expressed as
\begin{equation}\label{eq:filter-update}
    \pi_{t} = F_t(Y_t)(\pi_{t-1})
\end{equation}
where $F_t(Y_t)$ is a $Y_t$-dependent map on the space of probability distributions that consists of two operations: the propagation step that updates the posterior according to the dynamic model and the conditioning step that updates the posterior according to Bayes' rule; see Sec.~\ref{sec:filtering-problem} for details and a brief review. The initial distribution $\pi_0$ is the probability distribution of the initial condition $X_0$. 

Nonlinear filtering algorithms carry out different numerical approximations of the update step~\eqref{eq:filter-update}. Kalman filter (KF)~\cite{kalman1960new}, and its extensions~\cite{bar2004estimation,evensen2003ensemble,calvello2022ensemble}, rely on a Gaussian approximation of the joint distribution of the state and observation, thereby, simplifying~\eqref{eq:filter-update} to an update for a mean vector and a covariance matrix.  Due to the Gaussian approximation, the performance of KF type algorithms is limited to linear dynamical systems with additive Gaussian noise. Sequential importance re-sampling particle filters~~\cite{gordon1993novel,doucet09}
approximate the posterior with a weighted empirical distribution of a large number of particles. While they provide an asymptotically exact solution in the limit of infinitely many particles, they suffer from the weight-degeneracy issues in high dimensions~\cite{bickel2008sharp,bengtsson08,rebeschini2015can}. Coupling and controlled interacting particle system approaches~\cite{daum10,taoyang_TAC12,reich2013nonparametric,de2015stochastic,yang2016,marzouk2016introduction,spantini2022coupling,shi2022conditional,taghvaei2023survey}
avoid weight degeneracy by replacing the importance sampling step with a control law/coupling that updates the location of the particles with uniform weights. However, the main bottleneck of these types of algorithms becomes the online computation of the control law/coupling. 

This paper is built upon our recent work that proposes an optimal transport (OT)  variational formulation of the Bayes' law to construct nonlinear filtering algorithms~\cite{taghvaei2022optimal,al2023optimal,grange2023computational,al2023highdim}.
In this formulation, the update step~\eqref{eq:filter-update} is replaced with a push-forward of a map $T_t$ following 
\begin{equation}
    \pi_{t} = T_t(\cdot,Y_{t})_{\#} \pi_{t-1},
\end{equation}
and the map $T_t$ is obtained by solving a stochastic optimization problem that aims at finding the OT map from $\pi_{t-1}$ to $\pi_{t}$ (see Sec.~\ref{sec:OT-formulation} for details).
This approach, which is referred to as the OT particle filter (OTPF), has two main appealing features: (i) it only requires samples from the joint distribution of the state $X_t$ and the observations $Y_t$, without the need for the analytical model of the observation likelihood and dynamics, i.e. given a sample $X_{t-1}$, we need an oracle or a simulator that samples $X_{t}$ and $Y_{t}$; (ii)  it allows for the utilization of neural networks to enhance the representation power of the transport maps $T_t$ to model complex and multi-modal probability distributions. Due to these features, the OTPF is numerically favorable for problem settings that involve multi-modal and high-dimensional posterior distributions~\cite{al2023highdim}. However, the better performance comes with the cost of solving a stochastic optimization problem online at each time $t$. 

In this paper, we propose an algorithm, referred to as OT data-driven filter (OT-DDF), that improves upon OTPF in two critical aspects: 
\begin{enumerate}
    \item We make the algorithm completely {\it data-driven}, by only requiring recorded data from the state and observations
without active usage of a simulator/oracle.
\item We make the online computations very light by replacing the online optimization with an offline training stage that finds a fixed map $T$ that conditions on a truncated measurement history $\{Y_t,Y_{t-1},\ldots,Y_{t-w+1}\}$ for some window size $w$. 
\end{enumerate}
These improvements become possible by making two critical assumptions about the model: (A1) the process $(X_t,Y_t)$ is stationary; (A2) the filter dynamics~\eqref{eq:filter-update} is stable. Precise statements of the assumptions appear in Sec.~\ref{sec:assumptions}.

The rest of the paper is organized as follows: 
Sec.~\ref{sec:problem_formulation} includes the mathematical setup and the modeling assumptions; Sec.~\ref{sec:OTDDF} contains the proposed methodology accompanied with an error analysis; and section~\ref{sec:numerics} presents several numerical experiments that serve as proof of concept and compares the proposed algorithm with alternative approaches. 

	% \section{Methodology }
 %    \input{methodology}
    
    \section{Problem Formulation}\label{sec:problem_formulation}
    \subsection{Mathematical setup}
\label{sec:filtering-problem}
In this paper, we consider a discrete-time stochastic dynamic system given by the update equations
\begin{subequations}\label{eq:model}
\begin{align}\label{eq:model-dyn}
    X_{t} &\sim a_t(\cdot|X_{t-1}) , \quad X_0 \sim \pi_0 \\\label{eq:model-obs}
    Y_t &\sim h_t(\cdot|X_t)
\end{align}
\end{subequations}
for $t=1,2,\ldots $ where $X_t\in \mathbb{R}^n$ is the hidden state of the system, $Y_t \in \mathbb{R}^m$ is the observation, $\pi_0$ is the probability distribution for the initial state $X_0$, $a_t( x'|x)$ is the transition kernel from $X_{t-1}=x$ to $X_t=x'$,  and $h_t(y|x)$ is the  likelihood of % function
observing $Y_t=y $ given $X_t=x$. 

The dynamic and observation models are used to introduce the following propagation  and  conditioning operators
% where
\begin{subequations}
    \begin{align}
    \text{(propagation)}~ \pi \mapsto \mathcal A_t (\pi) &:= \int_{\mathbb R^n} a_t(\cdot|x) \pi(x)d x,\\
  \text{(conditioning)}~ \pi \mapsto \mathcal B_t(y)(\pi) &:= \frac{h_t(y|\cdot)\pi(\cdot)}{\int_{\mathbb R^n} h_t(y|x) \pi(x)d x},
   % \pi(\cdot;\mathcal{Y}) \mapsto \mathcal A \pi(\cdot;\mathcal{Y}) &:= \int_{\mathbb R^n} a(\cdot|x) \pi(x;\mathcal{Y})d x\\
   %  \quad \pi(\cdot;\mathcal{Y}) \mapsto \mathcal B_y \pi(\cdot;\mathcal{Y}) &:= \frac{h(y|\cdot)\pi(\cdot;\mathcal{Y})}{\int_{\mathbb R^n} h(y|x) \pi(x;\mathcal{Y})d x}
  \label{eq:Bayesian}   
\end{align}
\end{subequations} 
for an arbitrary probability distribution $\pi$. The propagation operator $\mathcal A_t$ represents the update for the distribution of the state according to the dynamic model~\eqref{eq:model-dyn}, and the 
conditioning operator $\mathcal B_t$ represents Bayes' rule that carries out the conditioning according to the observation model~\eqref{eq:model-obs}. The composition of these maps  is denoted by
\begin{equation*}
    F_t(y) := \mathcal B_t(y) \circ \mathcal A_t
\end{equation*}
and consecutive application of $F_t$ is denoted by
\begin{equation*}
    F_{t,s}(y_t,\ldots,y_{s+1}) \!:= F_t(y_t) \circ F_{t-1}(y_{t-1}) \circ \ldots \circ F_{s+1}(y_{s+1}),
\end{equation*}
for $t>s$. For simplicity, hereon, we introduce  the compact notation
%\begin{align*}
    $y_{t,s}:=\{y_t,\ldots,y_{s+1}\}$ for $t>s\geq 0$. 
%\end{align*}

We are interested in two conditional distributions:
\begin{itemize}
    \item {\bf Exact posterior:} The exact posterior $\pi_t$ is defined as the conditional distribution of $X_t$ given $Y_{t,0}:=\{Y_t,\ldots,Y_1\}$, i.e. 
\begin{align}\label{eq:exact-posterior}
    \pi_t(\cdot):= \mathbb P(X_t\in \cdot  \mid Y_{t,0}).
\end{align}
In terms of our notation earlier, it can be identified via
\begin{align}\label{eq:exact-posterior-F}
    \pi_t = F_{t,0}(Y_{t,0})(\pi_0).
\end{align}
\item {\bf Truncated posterior:} The truncated posterior, denoted by $\pi_{t,s}^\mu$, is defined as the conditional distribution of $X_t$, given $Y_{t,s}:=\{Y_{s+1},\ldots,Y_t\}$, with the prior distribution $X_s \sim \mu$, i.e.
\begin{align}\label{eq:truncated-posterior}
    \pi_{t,s}^\mu(\cdot):= \mathbb P_{X_s\sim \mu}(X_t\in \cdot \mid Y_{t,s}).
\end{align}
It is given by the equation 
\begin{equation}\label{eq:truncated-posterior-F}
    \pi_{t,s}^\mu=F_{t,s}(Y_{t,s})(\mu). 
\end{equation}
\end{itemize}

\subsection{Modelling assumptions}
\label{sec:assumptions}

To ensure the applicability of our proposed method, we make the following two critical assumptions. 
 \begin{assumption}\label{assump:system_invariant}
The stochastic process $(X_t,Y_t)$ is stationary. In particular, the model~\eqref{eq:model} is time invariant, i.e. $a_t=a$ and $h_t=h$ for all $t=1,2,\ldots$, and $\pi_0$ is equal to the unique stationary distribution of $\mathcal A$, i.e. $\mathcal A \pi_0 = \pi_0$.  The stationary distribution has finite second moments, and it admits a density with respect to the Lebesgue measure.
\end{assumption}
\medskip 
This assumption implies that 
\begin{equation}
    F_{t,s} = F_{t-s,0},~\forall t>s\geq 0. 
\end{equation}

\begin{remark}
    The assumption that $\pi_0$ is equal to the stationary distribution is made to facilitate the error analysis in Sec.~\ref{sec:error_analysis}. This assumption may be replaced with geometric ergodicity of the Markov process $X_t$ at the cost of the extra term appearing in the error bound~\eqref{eq:pitP-bound}. In our numerical simulations, we simulate the true model for a {\it burn-in} period to ensure the probability distribution of $X_t$ is close to being stationary. 
 \end{remark}
% \at{This assumption is later used in BLAH to BLAH}
%This assumption is used in Sec.~\ref{} to justify learning a fixed map $T_{w}$ that replaces $F_{t,t-w}$ for all $t\geq w$. 

% \medskip 
The second assumption is related to the stability of the filtering dynamics~\eqref{eq:exact-posterior-F}. 
Consider the following metric on (possibly random) probability measures $\mu,\nu$:
\begin{equation}\label{eq:metric}
    d(\mu,\nu) := \sup_{g \in \mathcal G} \sqrt{\mathbb{E}\left|\int g d \mu - \int g d \nu\right|^2}
\end{equation}
where the expectation is over the possible randomness of the probability measures $\mu$ and $\nu$, and $\mathcal G:=\{g:\Re^n \to \Re;\,|g(x)|\leq 1, |g(x)-g(x')|\leq \|x-x'\|,\quad \forall x,x'\}$ is the space of functions that are uniformly bounded by one and uniformly Lipschitz with {a constant} smaller than one (this metric is also known as the dual bounded-Lipschitz distance~\cite{van2009uniform}). 

 \begin{assumption}\label{assump:stability}
 The filter 
  is uniformly geometrically stable, i.e.  
$\exists \lambda\in (0,1)$ and a positive constant $C>0$ such that for all $\mu,\nu$ and $ t>s\geq 0$ it holds that
\begin{equation}\label{eq:stability}
d(F_{t,s}(Y_{t,s})(\mu), F_{t,s}(Y_{t,s})(\nu)) \leq C\lambda^{t-s}d(\mu,\nu).
% \quad \forall t>s\geq 0.
\end{equation}
 % where $Y_{t,s}:=\{Y_t,\ldots,Y_{s+1}$ is distributed a
\end{assumption}
\medskip

\begin{remark}
    The filter stability is a natural assumption that ensures the applicability of a numerical filtering algorithm. Similar stability assumptions also appear in~\cite{del2001stability,van2009uniform} for the error analysis of particle filters. However, finding necessary and sufficient conditions that ensure filter stability is challenging; see~\cite[Remark 1]{al2023optimal} and~\cite{van2009observability,crisan2011oxford,kim2022duality} for 
    example conditions that ensure filter stability. 
\end{remark}

\subsection{Objective}
In the usual filtering setup, the dynamic and observation models, $a$ and $h$, are known. However,   
we assume that these models are unknown. Instead, we have access to  $J$ recorded independent state-observation trajectories  of length $t_f$, i.e. $\{X^j_0,(X^j_1,Y^j_1),\ldots,(X^j_{t_f},Y^j_{t_f})\}_{j=1}^J$. Then, our objective is 
% to design a numerical algorithm that approximates the exact posterior $\pi_t$ given the observation history $\{Y_1,\ldots,Y_t\}$.
\begin{align*}
    \text{Given:}\quad & \left\{X^j_0,(X^j_1,Y^j_1),\ldots,(X^j_{t_f},Y^j_{t_f})\right\}_{j=1}^J\\
    \text{Approximate:}\quad   &\pi_t= \mathbb P(X_t\in \cdot \mid Y_t,\ldots,Y_1)\quad \forall t\geq 0, \\ & \text{for a new 
    set of observations } \{Y_t, \dots, Y_1\}.
\end{align*}

    \section{The Optimal transport Data-Driven Filter}\label{sec:OTDDF}
    \subsection{OT formulation for conditioning}\label{sec:OT-formulation}
The proposed methodology is based on the OT formulation of the Bayes' law that is used to represent conditional distributions as a push-forward of 
 OT maps~\cite{taghvaei2022optimal,hosseini2023conditional}. 
Consider a joint probability distribution $\nu_{XY}$ and its conditional   $\nu_{X|Y}$. Then, the goal is to find a map $\overline{T}$ such that 
\begin{align}\label{eq:OT-formulation-push-forward}
    \nu_{X|Y}(\cdot|y) = \overline T(\cdot,y)_{\#}\eta_X
\end{align}
where $\eta_X$ is an arbitrary probability distribution. Furthermore, the map $\overline T(\cdot,y)$ is the OT map from $\eta_X$ to $\nu_{X|Y}(\cdot|y)$ for $\nu_Y$-almost every $y$; where we used 
$\nu_Y$ to denote the $Y$-marginal of $\nu_{XY}$. The OT formulation is useful because the  map $\overline T$ can be obtained as the solution to a max-min stochastic optimization problem
\begin{align} \label{eq:OT-formulation-min-max}
    \max_{ f \in c\text{-Concave}_x}\,
    \min_{T \in \mathcal{M}(\eta)}\, J_{\eta,\nu}(f,T)
\end{align}
where $\eta= \eta_X \otimes \nu_Y$ is the independent coupling of $\eta_X$ and $\nu_Y$, $\mathcal{M}(\eta)$ denotes
the set of  $\eta$-measurable maps, $f \in c\text{-Concave}_x$ means $x\mapsto \frac{1}{2}\|x\|^2 - f(x,y)$ is convex in $x$ for all $y$~\cite[Def. 2.33]{villani2003topics}, and
the objective function
\begin{align*}
J_{\eta,\nu}(f,T)& := \mathbb{E}_{(X,Y) \sim \nu} [ f(X,Y)] +\\ 
    &\mathbb{E}_{(X,Y) \sim \eta} [\frac{1}{2}\|T(X,Y)- X\|^2 - f(T( X,Y),Y)]. 
\end{align*}
%Next, we explain how the OT formualtion is used in our proposed algorithm. 
The well-posedness of the max-min problem is stated in the following theorem. 
\begin{theorem}\label{thm:conditioning}
    If $\eta_X$ has a finite second moment and it is absolutely continuous with respect to the Lebesgue measure, then the max-min problem~\eqref{eq:OT-formulation-min-max} admits a unique optimal pair $(\overline f,\overline T)$, modulo additive constant shifts for $\overline{f}$, and the relationship~\eqref{eq:OT-formulation-push-forward} holds $\nu_Y$-almost everywhere.  
    % solution $T = \Tilde{T}$ that satisfies
\end{theorem}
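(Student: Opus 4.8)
The plan is to reduce the joint max-min problem to a family of classical (unconditional) quadratic-cost optimal transport problems indexed by $y$, to solve each fiber using Brenier's theorem, and then to reassemble the fiberwise optimizers into jointly measurable functions. First I would fix a competitor $f \in c\text{-Concave}_x$ and carry out the inner minimization over $T \in \mathcal{M}(\eta)$. Since $\eta = \eta_X \otimes \nu_Y$ and the term $\mathbb{E}_{(X,Y)\sim\nu}[f(X,Y)]$ does not depend on $T$, the minimization reduces to $\min_T \mathbb{E}_{(X,Y)\sim\eta}[\frac{1}{2}\|T(X,Y)-X\|^2 - f(T(X,Y),Y)]$. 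By the interchange of minimization and integration for normal integrands, this equals $\mathbb{E}_{(X,Y)\sim\eta}[\overline f(X,Y)]$, where $\overline f(x,y):=\min_{z}\{\frac{1}{2}\|z-x\|^2 - f(z,y)\}$ is the $c$-transform of $f(\cdot,y)$. The $c$-concavity constraint makes $z\mapsto \frac{1}{2}\|z\|^2-f(z,y)$ convex, so the inner minimizer is $\eta_X$-a.e.\ unique and equals the gradient of its convex conjugate; this minimizer is the optimal map $\overline T(x,y)$.

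Substituting back and disintegrating over $\nu_Y$, the outer problem becomes $\max_f\, \mathbb{E}_{Y\sim\nu_Y}\big[\int f(\cdot,Y)\,d\nu_{X|Y}(\cdot|Y) + \int \overline f(\cdot,Y)\,d\eta_X\big]$. Because $f(\cdot,y)$ may be chosen independently for each $y$ and the integrand decouples across fibers, I would exchange the maximization with the $\nu_Y$-expectation, reducing the problem to solving, for $\nu_Y$-a.e.\ $y$, the Kantorovich semi-dual for quadratic-cost OT between $\eta_X$ and $\nu_{X|Y}(\cdot|y)$. Here I would invoke Brenier's theorem~\cite{villani2003topics}: since $\eta_X$ is absolutely continuous with finite second moment, for $\nu_Y$-a.e.\ $y$ the fiber problem has an optimal potential $\overline f(\cdot,y)$, unique up to an additive constant, and the induced map $\overline T(\cdot,y)$ is the unique OT map with $\overline T(\cdot,y)_{\#}\eta_X = \nu_{X|Y}(\cdot|y)$, which is precisely the push-forward relation~\eqref{eq:OT-formulation-push-forward}. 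This step also requires $\nu_{X|Y}(\cdot|y)$ to have a finite second moment for $\nu_Y$-a.e.\ $y$, which I would obtain from finiteness of the second moment of the joint law $\nu_{XY}$.

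The main obstacle is the measurable-selection argument underlying both interchanges. I must show that the fiberwise optimizers can be selected so that $(x,y)\mapsto\overline f(x,y)$ and $(x,y)\mapsto\overline T(x,y)$ are jointly measurable, that $\overline T \in \mathcal{M}(\eta)$, and that $\overline f$ is an admissible $c$-concave competitor on the whole space; only then is exchanging $\max_f$ with $\mathbb{E}_{Y}$ legitimate. I would handle this with a measurable selection theorem for the argmin of a jointly measurable normal integrand, as developed for conditional OT in~\cite{hosseini2023conditional}. Uniqueness of the pair then follows fiberwise: $\overline T(\cdot,y)$ is $\eta_X$-a.e.\ unique by Brenier, giving uniqueness of $\overline T$ in $\mathcal{M}(\eta)$, while $\overline f(\cdot,y)$ is pinned down up to a single additive constant once the connectedness of $\mathrm{supp}\,\eta_X$ is used to rule out locally constant ambiguities; assembling over $y$ yields uniqueness modulo additive constant shifts.
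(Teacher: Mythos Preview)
The paper does not supply its own proof of this theorem; immediately after the statement it defers to \cite[Proposition~2.3]{al2023highdim}, which in turn extends \cite[Theorem~2.3]{carlier2016vector} and \cite[Theorem~2.4]{kovachki2020conditional}. Your outline is precisely the argument carried out in those references: disintegrate the objective over $\nu_Y$, solve each fiber via the semi-dual formulation and Brenier's theorem, and invoke a measurable-selection result to reassemble the fiberwise optimizers into jointly measurable $(\overline f,\overline T)$. So your approach matches the cited proof as closely as possible given that the paper itself defers it.

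Two minor technical remarks. First, you correctly flag the need for $\nu_{X|Y}(\cdot|y)$ to have finite second moment for $\nu_Y$-a.e.\ $y$; this is not among the stated hypotheses of the theorem but is assumed in the cited references and holds in the paper's application through Assumption~\ref{assump:system_invariant}. Second, the phrase ``modulo additive constant shifts'' should be read as allowing $y$-dependent additive constants, since each fiber potential is determined only up to a constant depending on $y$; your discussion is consistent with this, though the connectedness of $\mathrm{supp}\,\eta_X$ you invoke is an extra hypothesis not present in the statement and is not needed once the ambiguity is understood fiberwise.
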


\begin{remark}{The proof of this result appears in~\cite[Proposition 2.3]{al2023highdim}} which is an extension of the existing results \cite[Theorem 2.3]{carlier2016vector} and~\cite[Theorem 2.4]{kovachki2020conditional}. The extension to the Riemannian manifold and infinite-dimensional functional space settings appear in~\cite{grange2023computational} and~\cite{hosseini2023conditional}, respectively. 
\end{remark}

The following result relates the error in approximating the conditional distribution with the optimality gap of solving the max-min problem. In particular, let $(\hat f,\hat T)$ be the output of an algorithm that approximately solves~\eqref{eq:OT-formulation-min-max}
and consider $\hat T(\cdot,Y)_{\#}\eta_X$ 
as an approximation to $\nu_{X|Y}(\cdot|y)$. Define the max-min optimality gap
\begin{equation}\label{eq:opt-gaps}
\begin{aligned}
        \epsilon(\hat f,\hat T) := &J_{\eta,\nu}(\hat f,\hat T) - \min_{T} J_{\eta,\nu}(\hat f,T) \\+& \max_{f}\min_{T}J_{\eta,\nu}(f,T) - \min_{T} J_{\eta,\nu}(\hat f,T),  
\end{aligned}
\end{equation}
where the first term specifies the gap in the minimization, and the second term specifies the gap in the maximization.
Then we have the following lemma, the proof of which is given in
~\cite[Proposition 2.4]{al2023highdim}.
\begin{lemma}\label{lem:opt-gap}
    Consider the assumptions of Theorem~\ref{thm:conditioning}. Then, for any pair $(\hat f,\hat T)$ such that $x \mapsto \frac{1}{2}\|x\|^2-\hat f(x,y)$ is $\alpha$-strongly 
convex in $x$ for all $y$, we have the bound
\begin{equation}\label{eq:T-error-opt-gap}
    d(\hat T(\cdot,Y)_{\#}\eta_X,\nu_{X|Y}(\cdot|Y))\leq \sqrt{\frac{4}{\alpha}\epsilon(\hat f,\hat T)}.
\end{equation}
\end{lemma}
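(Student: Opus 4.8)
The plan is to pass from the dual bounded-Lipschitz metric $d$ to the ordinary $L^2(\eta)$ distance between the approximate map $\hat T$ and the true OT map $\overline T$ supplied by Theorem~\ref{thm:conditioning}, and then to control that $L^2$ distance by the two pieces of the optimality gap $\epsilon(\hat f,\hat T)$ using strong convexity. For the first step I would use that Theorem~\ref{thm:conditioning} gives $\nu_{X|Y}(\cdot|Y)=\overline T(\cdot,Y)_{\#}\eta_X$ for $\nu_Y$-a.e. $Y$, so that for any $g\in\mathcal G$ one has $\int g\,d(\hat T(\cdot,Y)_{\#}\eta_X)-\int g\,d\nu_{X|Y}(\cdot|Y)=\mathbb{E}_{X\sim\eta_X}[g(\hat T(X,Y))-g(\overline T(X,Y))]$. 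Applying the $1$-Lipschitz bound on $g$ and then Cauchy–Schwarz (once inside the $X$-average, once under the outer $Y$-expectation) yields $d(\hat T(\cdot,Y)_{\#}\eta_X,\nu_{X|Y}(\cdot|Y))\leq \|\hat T-\overline T\|_{L^2(\eta)}$, where $\eta=\eta_X\otimes\nu_Y$.

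Next I would introduce the exact minimizer $T_{\hat f}:=\argmin_T J_{\eta,\nu}(\hat f,T)$ and split $\|\hat T-\overline T\|_{L^2(\eta)}\leq \|\hat T-T_{\hat f}\|_{L^2(\eta)}+\|T_{\hat f}-\overline T\|_{L^2(\eta)}$. The structural fact that powers both terms is that $T\mapsto J_{\eta,\nu}(\hat f,T)$ is $\alpha$-strongly convex on $L^2(\eta)$: pointwise in $(x,y)$ the integrand $z\mapsto \frac{1}{2}\|z-x\|^2-\hat f(z,y)$ equals $[\frac{1}{2}\|z\|^2-\hat f(z,y)]$ plus a term affine in $z$, and the bracket is $\alpha$-strongly convex by hypothesis; integrating the pointwise inequalities against $\eta$ lifts strong convexity to the functional. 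The quadratic growth bound at a minimizer then gives both $\|\hat T-T_{\hat f}\|_{L^2(\eta)}^2\leq \frac{2}{\alpha}\big[J_{\eta,\nu}(\hat f,\hat T)-\min_T J_{\eta,\nu}(\hat f,T)\big]$ and $\|T_{\hat f}-\overline T\|_{L^2(\eta)}^2\leq \frac{2}{\alpha}\big[J_{\eta,\nu}(\hat f,\overline T)-\min_T J_{\eta,\nu}(\hat f,T)\big]$.

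The crux, and the step I expect to be the main obstacle, is identifying the second numerator with the maximization gap, i.e. establishing $J_{\eta,\nu}(\hat f,\overline T)=\max_f\min_T J_{\eta,\nu}(f,T)$. Since $f\mapsto J_{\eta,\nu}(f,\overline T)$ is affine, the difference $J_{\eta,\nu}(\hat f,\overline T)-J_{\eta,\nu}(\overline f,\overline T)$ collapses to $\mathbb{E}_\nu[(\hat f-\overline f)(X,Y)]-\mathbb{E}_\eta[(\hat f-\overline f)(\overline T(X,Y),Y)]$, and the push-forward identity from Theorem~\ref{thm:conditioning} forces $\mathbb{E}_\eta[\phi(\overline T(X,Y),Y)]=\mathbb{E}_\nu[\phi(X,Y)]$ for every $\phi$, so the two terms cancel and $J_{\eta,\nu}(\hat f,\overline T)=J_{\eta,\nu}(\overline f,\overline T)=\max_f\min_T J_{\eta,\nu}$. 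Writing $A$ and $B$ for the first and second terms of $\epsilon(\hat f,\hat T)$, the two bounds become $\|\hat T-T_{\hat f}\|^2_{L^2(\eta)}\leq \frac{2}{\alpha}A$ and $\|T_{\hat f}-\overline T\|^2_{L^2(\eta)}\leq \frac{2}{\alpha}B$, and I would finish with $\big(\sqrt{2A/\alpha}+\sqrt{2B/\alpha}\big)^2\leq \frac{4}{\alpha}(A+B)=\frac{4}{\alpha}\epsilon(\hat f,\hat T)$ using $2\sqrt{AB}\leq A+B$, which combined with the metric reduction of the first paragraph gives~\eqref{eq:T-error-opt-gap}.
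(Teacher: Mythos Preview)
The paper does not prove this lemma in the body of the text; it simply cites \cite[Proposition~2.4]{al2023highdim}. Your proposal is correct and self-contained: the reduction from the dual bounded-Lipschitz metric to $\|\hat T-\overline T\|_{L^2(\eta)}$ via the push-forward identity and the $1$-Lipschitz bound is clean, and the decomposition through $T_{\hat f}=\argmin_T J_{\eta,\nu}(\hat f,T)$ together with the $\alpha$-strong convexity of $T\mapsto J_{\eta,\nu}(\hat f,T)$ yields precisely the two pieces $A$ and $B$ of $\epsilon(\hat f,\hat T)$ after the saddle-point identity $J_{\eta,\nu}(\hat f,\overline T)=J_{\eta,\nu}(\overline f,\overline T)$ (which indeed follows immediately from $\overline T(\cdot,Y)_{\#}\eta_X=\nu_{X|Y}(\cdot|Y)$). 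The final inequality $(\sqrt{2A/\alpha}+\sqrt{2B/\alpha})^2\le \tfrac{4}{\alpha}(A+B)$ recovers the stated constant. This is the natural strong-convexity/quadratic-growth argument for this type of bound, and there is no reason to expect the cited proof to differ in substance.
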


\subsection{Proposed methodology}
The proposed methodology is summarized in four steps: 

\newP{Step 1} We propose to approximate the truncated posterior~\eqref{eq:truncated-posterior} instead of the exact posterior~\eqref{eq:exact-posterior}. This step introduces an error that is bounded due to filter stability Assumption~\ref{assump:stability}.  In particular, replacing $\nu$ by $\pi_s$ in~\eqref{eq:stability}, and the fact that $F_{t,s}(Y_{t,s})(\pi_s)=\pi_t$ and $F_{t,s}(Y_{t,s})(\mu)=\pi^\mu_{t,s}$ , we conclude the bound
\begin{equation}\label{eq:truncation-bound}
d(\pi^{\mu}_{t,s}, \pi_t) \leq C\lambda^{t-s}d(\mu,\pi_s).
% \quad \forall t>s\geq 0.
\end{equation}  
The error bound can be made arbitrarily small by choosing a large window size $w:=t-s$ and assuming a uniform bound on $d(\mu,\pi_s)$ for all $s$. 

\newP{Step 2} 
We use the OT formulation~\eqref{eq:OT-formulation-push-forward} with the min-max problem~\eqref{eq:OT-formulation-min-max} to characterize the truncated posterior. In order to do so, we choose the target distribution $\nu$ to be the joint distribution of $(X_t,Y_{t,s})$ where $X_t$ and $Y_{t,s}:=\{Y_t,\ldots,Y_{s+1}\}$ are generated using the stochastic model~\eqref{eq:model-dyn}-\eqref{eq:model-obs} with $X_s \sim \mu$. And we choose the source distribution $\eta$ to be equal to the independent coupling of $X_s\sim \mu$ (i.e. $\eta_X=\mu$) and $Y_{t,s}$, i.e. 
\begin{equation}\label{eq:target-source}
\begin{aligned}
    \nu &= \text{Law}(X_t,Y_{t,s}) &&\text{with}\quad X_s \sim \mu,\\
  \eta &= \text{Law}(X_s)\otimes \text{Law}(Y_{t,s})\quad &&\text{with}\quad X_s \sim \mu. 
\end{aligned}
\end{equation}
With this setup, the conditional distribution $\nu_{X|Y}$ equals the truncated posterior $\pi_{t,s}^\mu$. Let $\overline T^\mu_{t,s}$ denote the optimizer of the max-min problem~\eqref{eq:OT-formulation-min-max} with $\nu$ and $\eta$ chosen as explained above. Then, the relationship \eqref{eq:OT-formulation-push-forward} implies that
\begin{align*}
    \pi_{t,s}^\mu(\cdot) = \overline T_{t,s}^\mu(\cdot,Y_{t,s})_{\#} \mu. 
\end{align*}
Using the fact that $\pi_{t,s}^\mu$ is also given by~\eqref{eq:truncated-posterior-F}, we can also conclude the identity 
\begin{align}\label{eq:F-T-identity}
    F_{t,s}(Y_{t,s})(\mu) = \overline T_{t,s}^\mu(\cdot,Y_{t,s})_{\#} \mu,
\end{align}
for all probability distributions $\mu$. 

 \newP{Step 3} By  the time-invariance  Assumption~\ref{assump:system_invariant} and the identity~\eqref{eq:F-T-identity} we conclude, with $w= t-s$, that
\begin{align*}
    \overline T^\mu_{t,s} =      \overline  T^\mu_{w,0} \quad \forall\,t>s\geq 0. 
\end{align*}

\newP{Step 4} We use the recorded data  to numerically approximate the map $\overline T^\mu_{w,0}$ by solving  the max-min problem~\eqref{eq:OT-formulation-min-max}. In this problem, the target distribution $\nu$ is equal to the joint distribution of $(X_w,Y_{w,0})$ with $X_0 \sim \mu$ and the source distribution $\eta$ is equal to the independent coupling of $X_0$ and $Y_{w,0}$. The source and target distributions are empirically approximated as 
\begin{align}\label{eq:nu-hat}
    \nu &\approx  \hat \nu:=\frac{1}{J}\sum_{j=1}^J \delta_{(X^j_{w},Y^j_{w,0})},~
    \eta \approx \hat \eta := \frac{1}{J}\sum_{j=1}^J \delta_{(X^{\sigma_j}_{0},Y^j_{w,0})}
\end{align}
where $\{X^j_0,(X^j_1,Y^j_1),\ldots,(X^j_w,Y^j_w)\}_{j=1}^J$ are independent realizations of the state and observation available from recorded data, and $\{\sigma_1,\ldots,\sigma_J\}$ is a random permutation of $\{1,2,\ldots,N\}$. We use stochastic optimization 
methods to approximately solve the resulting optimization problem by searching for the functions $f$ and map $T$ inside the parameterized classes $\mathcal F$ and $\mathcal T$ respectively {(algorithm details appear in~Sec.~\ref{sec:numerical_alg})}. We denote the resulting approximate pair by $(\hat f^\mu_{w,0},\hat{T}^\mu_{w,0})$, i.e. 
\begin{equation}\label{eq:numerical_eq}
    (\hat f^\mu_{w,0},\hat{T}^\mu_{w,0}) \leftarrow \max_{f \in \mathcal F}\,\min_{T \in  \mathcal T}\,J_{\hat \eta,\hat \nu}(f,T). 
\end{equation}

\newP{Summary} The four-step procedure is summarized as 
\begin{align*}
    \pi_t \overset{(1)}{\approx} \pi_{t,s}^\mu &\overset{(2)}{=} \overline T^\mu_{t,s}(\cdot,Y_{t,s})_{\#} \mu \\&\overset{(3)}{=} \overline T^\mu_{w,0}(\cdot,Y_{t,s})_{\#}\mu \overset{(4)}{\approx} \hat T^\mu_{w,0}(\cdot,Y_{t,s})_{\#} \mu, 
\end{align*}
where the first step is approximation due to truncation, the second step is identity using the OT formulation, the third step is identity using the stationarity of the model, and the fourth step is numerical and algorithmic approximation.

    \subsection{Error analysis}\label{sec:error_analysis}
    The objective of the error analysis is to bound the error 
between the exact posterior $\pi_t$ and the approximation $\hat T^\mu_w(\cdot,Y_{t,s})_{\#}\mu$ obtained from the four step procedure. Two steps of the procedure involve approximation. The first step is due to the truncation, and the fourth step is due to approximation in solving the max-min problem.  The error, due to the first step, is bounded using the filter stability according to~\eqref{eq:truncation-bound}. The error, due to the second step, is bounded by the optimality gap using Lemma~\ref{lem:opt-gap}. The two results are combined to conclude an error bound under the following assumptions about the algorithm. 
\begin{assumption} \label{assump:algorithm} The following conditions, regarding the algorithm, hold: 
\begin{enumerate}
    \item[A.3a] 
   $\mu$  is equal to the stationary distribution of $X_t$.\footnote{In numerical experiments this can be approximately satisfied using a large enough burn-in time. } 
\item[A.3b] $\exists M>0$ such that $d(\pi_t,\mu)<M$ for all $t$. 
\item [A.3c] $x \mapsto \frac{1}{2}\|x\|^2-\hat f_{w,0}^\mu(x,y)$ is $\alpha$-strongly 
convex in $x$ for all $y$.
    \end{enumerate}
    
\end{assumption}

\begin{proposition}\label{prop:mean-field} Consider 
a window side $w>0$ and 
% the four-step procedure proposed in Sec.~\ref{sec:OTDDF} with window size $w>0$. 
suppose Assumptions~\ref{assump:system_invariant},~\ref{assump:stability}, and~\ref{assump:algorithm} hold. 
    Then,    for all $t>w$,
      \begin{align}\label{eq:pitP-bound}
        d(\hat T^\mu_{w,0}(\cdot,Y_{t,t-w})_{\#}\mu,\pi_t) \leq
        C\lambda^w M +\sqrt{\frac{4}{\alpha}\Expect\epsilon(\hat f^\mu_{w,0},\hat T_{w,0}^\mu)},
    \end{align}
where the expectation is with respect to the randomness of training data and possibly the optimization procedure. 
\end{proposition}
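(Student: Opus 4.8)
The plan is to bound the total error by a triangle inequality that separates a \emph{truncation} error from an \emph{optimization} error, and then to control each piece with the two tools already available: the filter-stability estimate~\eqref{eq:truncation-bound} and Lemma~\ref{lem:opt-gap}. Writing $s=t-w$ and inserting the truncated posterior $\pi_{t,t-w}^\mu$ as an intermediate measure, I would first use the triangle inequality for the metric $d$ (valid because, for each fixed $g\in\mathcal G$, $\int g\,d\mu-\int g\,d\nu$ is measured in the $L^2$-norm over the ambient randomness, and the supremum of a sum is bounded by the sum of suprema) to obtain
\begin{equation*}
\begin{split}
d(\hat T^\mu_{w,0}(\cdot,Y_{t,t-w})_{\#}\mu,\pi_t) &\leq d(\hat T^\mu_{w,0}(\cdot,Y_{t,t-w})_{\#}\mu,\pi_{t,t-w}^\mu) \\
&\quad + d(\pi_{t,t-w}^\mu,\pi_t).
\end{split}
\end{equation*}

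For the second (truncation) term I would apply~\eqref{eq:truncation-bound} with $s=t-w$, giving $d(\pi_{t,t-w}^\mu,\pi_t)\leq C\lambda^{w}d(\mu,\pi_{t-w})$, and then invoke Assumption~\ref{assump:algorithm}(A.3b) together with the symmetry of $d$ to replace $d(\mu,\pi_{t-w})$ by the uniform bound $M$, producing the first summand $C\lambda^{w}M$. Assumption~\ref{assump:algorithm}(A.3a) enters to justify the preceding methodology: since $\mu$ is the stationary marginal, stationarity (Assumption~\ref{assump:system_invariant}) makes the law of $(X_w,Y_{w,0})$ coincide with that of $(X_t,Y_{t,t-w})$, so the map $\hat T^\mu_{w,0}$ trained on the former is exactly the object to apply to the window $Y_{t,t-w}$, and $\pi_{t,t-w}^\mu$ is precisely the conditional $\nu_{X|Y}(\cdot\mid Y_{t,t-w})$ of the joint law targeted in Step~4.

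For the first (optimization) term I would invoke Lemma~\ref{lem:opt-gap} with $\eta_X=\mu$, conditioning variable $Y=Y_{t,t-w}$, and the approximate pair $(\hat f^\mu_{w,0},\hat T^\mu_{w,0})$, whose strong-convexity hypothesis is exactly Assumption~\ref{assump:algorithm}(A.3c); recall that the gap $\epsilon$ in~\eqref{eq:opt-gaps} is measured against the \emph{population} objective $J_{\eta,\nu}$, so the generalization error of optimizing the empirical $J_{\hat\eta,\hat\nu}$ is already absorbed into it. The subtlety is that Lemma~\ref{lem:opt-gap} is stated for a \emph{fixed} pair, whereas here the pair is random through the training data, and the metric $d$ in the Proposition averages over both the data and the observation randomness. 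I would therefore apply Lemma~\ref{lem:opt-gap} conditionally on the training data $\mathcal{D}$: for each realization of $\mathcal{D}$ the conditional version of the squared metric is bounded by $\tfrac{4}{\alpha}\epsilon(\hat f^\mu_{w,0},\hat T^\mu_{w,0})$. Using the tower property together with the elementary inequality $\sup_g\mathbb E[\,\cdot\,]\leq \mathbb E[\sup_g\,\cdot\,]$ to push the supremum defining $d$ inside the data-expectation, the first term is bounded by $\sqrt{\tfrac{4}{\alpha}\Expect\epsilon(\hat f^\mu_{w,0},\hat T^\mu_{w,0})}$. Adding the two bounds yields~\eqref{eq:pitP-bound}.

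I expect the main obstacle to be this last manoeuvre: correctly disentangling the two sources of randomness in the definition of $d$ and justifying the exchange $\sup_g\mathbb E\leq\mathbb E\sup_g$, so that the conditional (per-realization) guarantee of Lemma~\ref{lem:opt-gap} upgrades to a bound featuring the \emph{expected} optimality gap $\Expect\epsilon$. Everything else---the triangle inequality, the direct substitution into~\eqref{eq:truncation-bound}, and the symmetry and boundedness arguments---should be routine.
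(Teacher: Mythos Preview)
Your proposal is correct and follows essentially the same route as the paper's proof: insert the truncated posterior $\pi_{t,t-w}^\mu=\overline T^\mu_{w,0}(\cdot,Y_{t,t-w})_{\#}\mu$ as an intermediate point, bound the truncation term by~\eqref{eq:truncation-bound} together with A.3b, bound the optimization term by Lemma~\ref{lem:opt-gap} under A.3c, and invoke A.3a to identify the law of $Y_{t,t-w}$ with the $\nu_Y$ used in training. Your treatment of the expectation over the training randomness is in fact more explicit than what the paper writes, but the underlying argument is the same.
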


\begin{remark}
The first term in the bound is due to the truncation and becomes small as the window size $w$ increases. The second term depends on the optimality gap, and it is expected to decrease as the class of functions $\mathcal{F,T}$ becomes more expressive and the number of samples $J$ becomes large. Analysis of the optimality gap is the subject of ongoing work, and {it is expected to follow from existing results for statistical generalization of optimal transport map estimation in~\cite{divol2022optimal} and approximation theory in~\cite{baptista2023approximation}.}
\end{remark}

\begin{proof}[Proof of Proposition~\ref{prop:mean-field}]
For simplicity, introduce the notation $\hat S := \hat T^\mu_{w,0}(\cdot,Y_{t,t-w})$ and $\overline S := \overline T^\mu_{w,0}(\cdot,Y_{t,t-w})$.  Upon the application of the triangle inequality and the identity $\pi_{t,t-w}^\mu  =\overline S_{\#} \mu$, we obtain the decomposition 
\begin{align*}
    d(&\hat S_{\#}\mu,\pi_t)  \leq   
     d(\hat S_{\#}\mu,\overline S_{\#}\mu) + d(\pi^\mu_{t,t-w},\pi_t). 
\end{align*}
Application of the bound~\eqref{eq:truncation-bound} and Assumption A3b on $d(\pi^\mu_{t,t-w},\pi_t)$ concludes the first term in~\eqref{eq:pitP-bound}.  Application of inequality~\eqref{eq:T-error-opt-gap} to $d(\hat S_{\#}\mu,\overline S_{\#}\mu)$ concludes the second term in~\eqref{eq:pitP-bound}.  Assumption A3a about the probability distribution $\mu$ is required to ensure that the probability distribution of the observation process  $\nu_Y$ that is used for the max-min optimization is equal to the probability distribution of $Y_{t,t-w}$ that comes from the true model~\eqref{eq:model}.  
\end{proof}

%%%%%%%%%%%%%%%%%%%%%%%%%%%%%%%%%%%%%%%%%%%%%%%%%%%%%%%%%%%%%%%%%%%%%%%%%

        \section{Numerics}\label{sec:numerics}
    \subsection{The numerical algorithm}\label{sec:numerical_alg}

%In this section, we present the numerical algorithm to apply the OT-DDF method. 
The OT-DDF algorithm consists of two stages: (i) an offline stage which approximately solves~\eqref{eq:numerical_eq} to obtain the transport map $\hat T^\mu_{w,0}$; (ii) an online stage which uses the truncated history of observations $Y_{t,t-w}$ and the learned map $\hat T^\mu_{w,0}$ to approximate the posterior $\pi_t$ for each time $t$.

In the offline stage, we use the ADAM stochastic optimization algorithm to solve~\eqref{eq:numerical_eq}. The algorithm consists of inner and outer loops. The inner loop consists of   $k_{inner}$ iterations of ADAM to update the map $T$, while the outer loop consists of $k_{outer}$ iterations of ADAM to update  $f$. The functions $f$ and $T$ are parameterized with neural networks with architectures described separately for each example.  The samples $(X^j_{t_0},Y^j_{t_0+w,t_0})_{j=1}^J$ are used to form the approximation of $\hat \nu $ and $\hat \eta $ in~\eqref{eq:nu-hat}. A burn-in time $t_0$ is included to ensure that the training data is stationary and assumption A3.a is satisfied approximately.  The offline stage is summarized in Algorithm~\ref{alg:ot-ddf}.

The subsequent stage is the online stage, where the output map $\hat T^\mu_{w,0}$ of algorithm~\ref{alg:ot-ddf} is used to approximate the posterior $\pi_t$ using the observations $Y_t$ that are received online. 

In particular, the posterior $\pi_t$ is approximated with the 
empirical measure $\frac{1}{N}\sum_{i=1}^N \delta_{X^i_t}$ where 
\[{{X}}_{t}^{i} = \hat T^\mu_{w,0}(X_{t_0}^i,Y_{t,t-w}), \]
and $\{X^i_{t_0}\}_{i=1}^N$  
are $N$ random samples from $\{X^j_{t_0}\}_{j=1}^J$.

\begin{algorithm}[t]
\caption{Offline Training of OT-DDF} 
\begin{algorithmic}%[1]
\STATE \textbf{Input:} Recorded data $\{X^j_0,(X^j_1,Y^j_1),\ldots,(X^j_w,Y^j_w)\}_{j=1}^J$, \\ burn-in time $t_0$, window $w$, batch size $b_s$, \\ architecture, optimizer and learning rates for $f,T$,\\inner and outer loop iterations $k_{inner}$, $k_{outer}$.

\STATE \textbf{Initialize:} initialize neural net $f,T$ weights $\theta_f,\theta_T$.
% \STATE \textbf{Offline Training stage:}
\STATE  Create a random permutation $\{\sigma_i\}_{i=1}^J$ 
\FOR{$k=1$ to $k_{outer}$}
\STATE Select random  batch $(X_{t_0}^{\sigma_i},X_{t_0+w}^i,Y_{t_0+w,t_0}^i)_{i=1}^{b_s}$ 
\STATE Define $T^i = T(  X_{t_0}^{\sigma_i},Y_{t_0+w,t_0}^i)$  for $i=1,\ldots,b_s$
%\STATE $\frac{1}{b_s}\sum_{i=1}^{b_s} \big[\frac{1}{2}\| X_{t_0}^{\sigma_i}- T^i\|^2 - f(T(X_{t_0}^{\sigma_i},Y_{t_0+w,t_0}^i),Y_{t_0+w,t_0}^i)\big]$
\FOR{$j=1$ to $k_{inner}$}
% \STATE  Update $\theta_T$ to minimize \(\frac{1}{b_s}\sum_{i=1}^{b_s} \big[\frac{1}{2}\|T(  X_{0}^{\sigma_i},Y_{t,t-w}^i)- X_{0}^{\sigma_i}\|^2 - f(T(X_{0}^{\sigma_i},Y_{t,t-w}^i),Y_{t}^i)\big]\)
\STATE  Update $\theta_T$ to minimize \(\frac{1}{b_s}\sum_{i=1}^{b_s} \big[\frac{1}{2}\| X_{t_0}^{\sigma_i}- T^i\|^2 - f(T^i,Y_{t_0+w,t_0}^i)\big]\)
\ENDFOR
\STATE Update $\theta_f$ to minimize\\ \( \frac{1}{b_s}\sum_{i=1}^{b_s} \big[-f( X_{t_0+w}^i,Y_{t_0+w,t_0}^i) + f(T^i,Y_{t_0+w,t_0}^i)\big]\)
\ENDFOR

\STATE \textbf{Output:} Map $\hat T^\mu_{w,0}=T$.
\end{algorithmic}
\label{alg:ot-ddf}
\end{algorithm}

The proposed OT-DDF method
is evaluated against three other algorithms: the Ensemble Kalman filter (EnKF)~\cite{evensen2006}, OTPF~\cite{al2023optimal,al2023highdim}, and the
sequential importance resampling (SIR) PF~\cite{doucet09}. The details for all of the three algorithms appear in~\cite{al2023highdim}, and the numerical code used to produce the results is available online\footnote{\url{https://github.com/Mohd9485/OT-DDF}}.

\subsection{Linear dynamics with linear and quadratic observation models}
Consider 
\begin{subequations}\label{eq:model-example}
\begin{align}
    X_{t} &= \begin{bmatrix}
        \alpha & \sqrt{1-\alpha^2}
        \\
        -\sqrt{1-\alpha^2} & \alpha
    \end{bmatrix}
    X_{t-1} + \sigma V_t\\
    Y_t &= h(X_t) + \sigma W_t
\end{align}
\end{subequations}
for $t=1,2,\dots$ where $X_t\in \mathbb R^2$, $Y_t \in \mathbb R$, $\{V_t\}_{t=1}^\infty$ and $\{W_t\}_{t=1}^\infty$ are i.i.d sequences of $2$-dimensional and one-dimensional standard Gaussian random variables, $\alpha=0.9$ and $\sigma^2=0.1$. Two observation functions are of interest:
\begin{align*}
    h(X_t)=X_t(1), \quad \text{and}\quad  h(X_t)=X_t(1)^2
\end{align*}
where $X_t(1)$ is the first component of the vector $X_t$. We refer to these observation models as linear and quadratic, respectively.

We implemented algorithm~\ref{alg:ot-ddf} with different window sizes $w=1,10,50$. The burn-in time was $t_{0} = 100-w$. The functions $f$ and $T$ were parameterized as ResNets with one and two blocks of size $64$ and $48$, respectively. The optimization learning rates for $f$ and $T$ was $10^{-3}$ and $5\times10^{-4}$ with  $k_{inner} = 10$, $k_{outer} = 12000$, and batch size $b_s=64$.

\begin{figure*}[t]

         \centering
         \includegraphics[width=1\hsize,trim={105 10 20 50},clip]{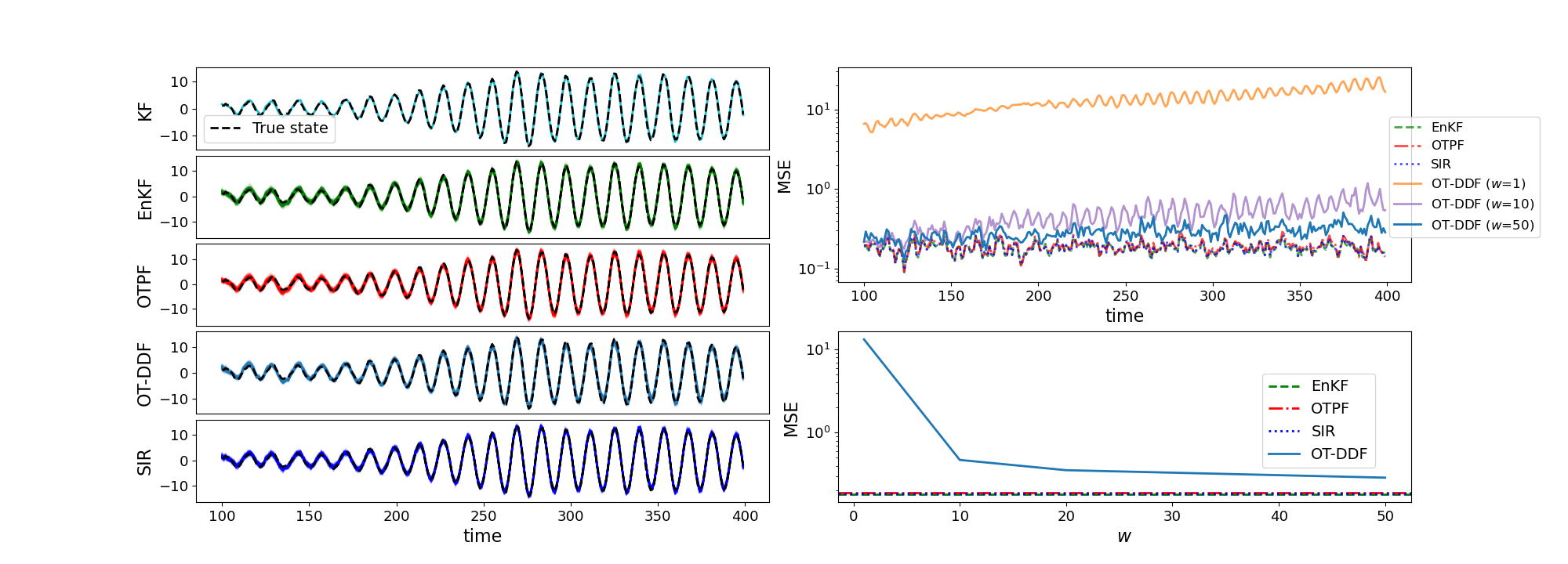}
     
     \caption{
     Numerical results for the linear dynamic example with linear observation function. The left column shows the trajectory of the second component of the particles along with the trajectory of the true state, where $w=50$ for the OT-DDF method. The right column shows the  MSE in estimating the state as a function of time in the upper corner and as a function of the window size $w$  in the lower corner.
     }
    \label{fig:X}
\end{figure*}

\begin{figure*}[t]

         \centering
         \includegraphics[width=1\hsize,trim={105 10 20 50},clip]{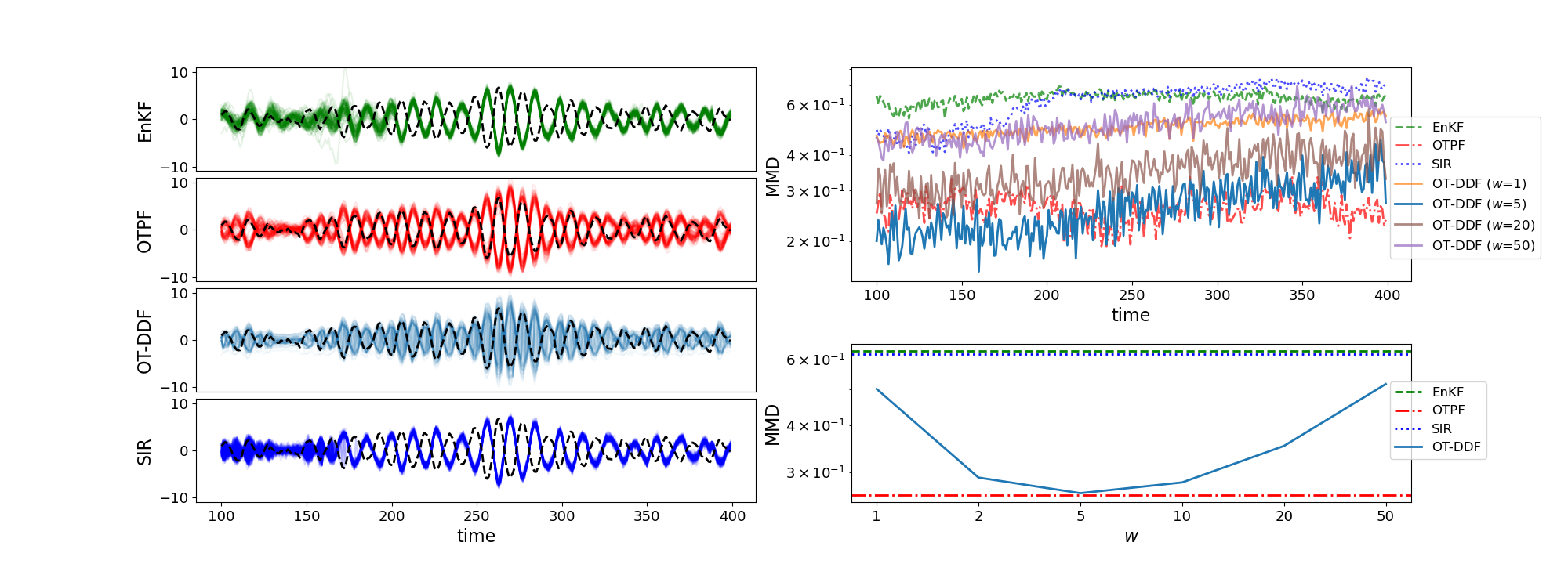}
     
     \caption{
         Numerical results for the linear dynamic example with quadratic observation function. The left column shows the trajectory of the second component of the particles along with the trajectory of the true state, where $w=5$ for the OT-DDF method.
         %The right column shows the  MSE in estimating the state as a function of time in the upper corner and as a function of the window size $w$  in the lower corner.
     %Numerical results for the application of filters to the quadratic observation function  $h(x) = x(1)\odot x(1)$. The left column shows the trajectory of one of the unobserved states of the particles along with the trajectory of the true states where $w=5$ for the OT-DDF method. 
     The right column shows the  MMD distance, with respect to the true posterior,  as a function of time in the upper corner and as a function of the window size $w$   in the lower corner.
     }
    \label{fig:XX}
\end{figure*}

The numerical results for the linear observation model are presented in Figure~\ref{fig:X}. The left column shows the trajectory of the particles along with the trajectory of the unobserved component $X(2)$ of the state for all methods and OT-DDF with $w=50$. We also included the Kalman filter (KF) because it provides the ground truth for this case.  The performance of the algorithms is quantified by computing the mean-squared-error (MSE) in estimating the true state $X_t$. The result is depicted in the top-right panel. The MSE is averaged over $50$ independent simulations.  The bottom right panel shows the time-averaged MSE for the OT-DDF method as the window size varies.  In the linear Gaussian setting, the KF is optimal and yields the least MSE.  The EnKF, OTPF, and SIR also provide the same performance as expected. The error for the OT-DDF is due to the window size and, as expected, decreases for larger windows.

% The numerical results for the linear observation function  $h(x) = x(1)$  is presented in Figure~\ref{fig:X} for a two-dimensional problem $n=2$, while the figure only shows the second component (the unobserved state). The left column shows the trajectory of the particles along with the trajectory of the hidden state for all methods (OT-DDF with $w=50$) including Kalman filter (KF) method. In this linear Gaussian setting Kalman filter is the optimal filter and we see all  algorithms are performing  as expected. We also quantify the performance of all algorithms in these experiments by computing the mean-squared-error (MSE) for the linear observation $h(x) = x(1)$.In the right column, we use an empirical average over $50$ independent simulations to approximate the expectation where the top figure shows MSE as a function of time for different methods and window sizes. The bottom figure to the right shows the MSE as a function of the window size and we add other methods as a reference.  

Similar results for the quadratic observation model are presented in Figure~\ref{fig:XX}. In this case, the posterior is bimodal due to the symmetry in the model. The trajectories from the left panel show that OTPF and OT-DDF were able to capture the bimodal distribution while EnKF and SIR experienced mode collapse. To quantify the performance, we used the maximum mean discrepancy (MMD)~\cite{gretton2006kernel} with respect to the true posterior approximated with the SIR method with a large number of particles $(N=10^5)$. The result is presented in the top right panel, where the MMD is averaged over $10$ independent simulations. The bottom right panel shows the time-averaged MMD as the window size varies. The results show that the error initially decreases as the window size increases, but it starts to grow after a certain window size. We conjecture that this is due to the tradeoff between the two terms in the error bound~\eqref{eq:pitP-bound}. For small window size $w$, the truncation error is dominating, which decreases as $w$ increases. For a large window size, the optimization gap is dominating which grows as $w$ increases. This is due to the fact that the neural net architecture and the number of data points are kept fixed while the input size to the neural net increases.

\begin{figure*}[t]

         \centering
         \includegraphics[width=1\hsize,trim={105 10 20 50},clip]{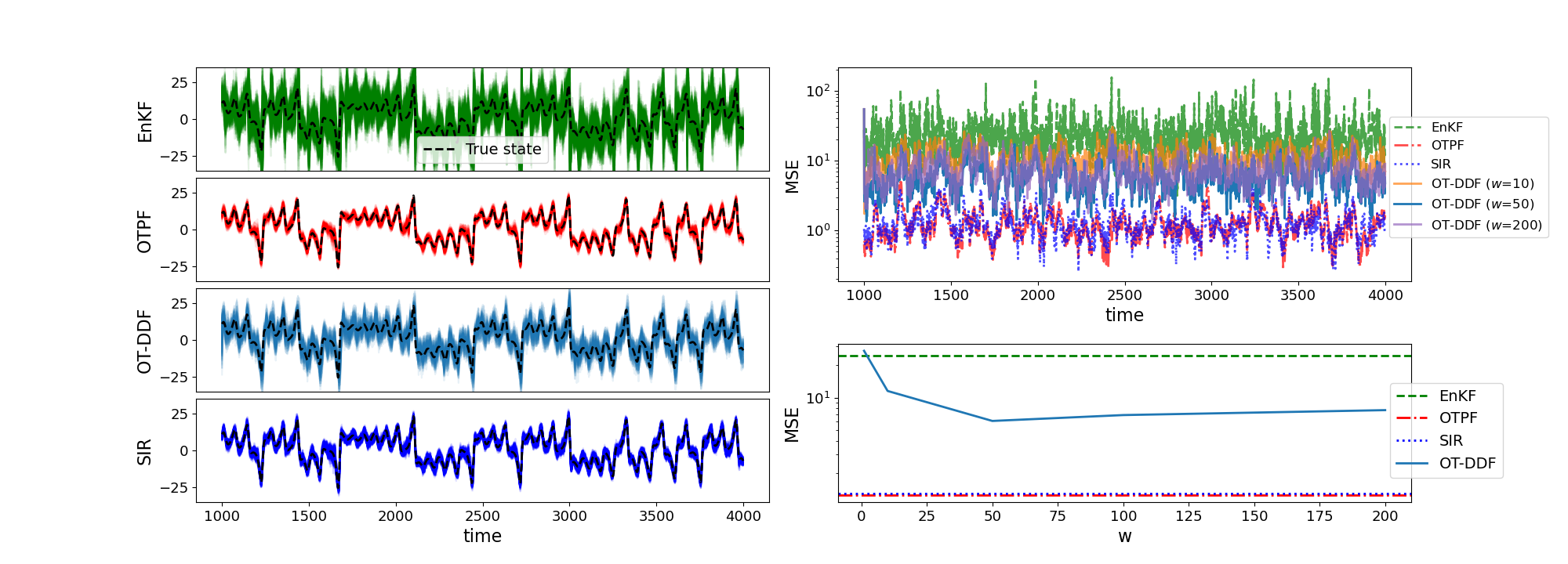}
     
     \caption{
     Numerical results for the Lorenz 63 example. The left column shows the trajectory of one of the unobserved components of the particles along with the trajectory of the true state, where $w=50$ for the OT-DDF method (the other components exhibit similar behavior). The right column shows the  MSE, in estimating the true state, as a function of time in the upper corner and as a function of the window size $w$ in the lower corner.
     }
    \label{fig:L63}
\end{figure*}

\subsection{Lorenz 63}
We repeated our experiments on a discrete-time (with time-discretization step-size $\Delta t = 0.01$) chaotic Lorenz 63 model~\cite{lorenz1963deterministic} with the observation function $h(x)=x(1)$ with additive zero-mean Gaussian noise of variance $10^{-1}$. Algorithm~\ref{alg:ot-ddf} was implemented for window sizes $w=10,50,200$ and burn-in time of $t_{0} = 1000-w$. The functions $f$ and $T$ wereparameterized with  ResNets of hidden size  $32$, with learning rate of $10^{-3}$ and $5\times10^{-3}$, number of iterations $k_{inner} = 10$ and $k_{outer} = 15000$. 
% The true chaotic model doesn't have a dynamic noise; however, a zero-mean Gaussian noise with variance $10^{-1}$ was added to the simulated particles for algorithms. 

The numerical results are presented in Figure~\ref{fig:L63}. The left panel shows the trajectory of the particles and the true state (only the second component is shown). The right column shows the  MSE of estimating the true state, averaged over $10$ independent simulations, as a function of time in the top and as a function of the window size in the bottom. 
The performance of the OT-DDF filter is expected to improve with further fine-tuning, increasing the iteration number of training and the number of parameters in the neural net.

We also report the wall-clock complexity of all algorithms in Table~\ref{table:time_complexity}. The simulations are carried out on a MAC STUDIO M2 Max with a 12‑core CPU, 30‑core GPU, and 64GB unified memory. The offline training time for the OT-DDF with window size $w=50$ is $46.29$ seconds. The computational time per one-time step update for all methods appears in Table~\ref{table:time_complexity}. The time complexity of all methods except the OTPF algorithm is at the same level of magnitude, which allows the OT-DDF algorithm to be implemented in an online setting once we have access to the map $\hat T^\mu_{w,0}$. 
%(the online time complexity may be compared with the time-discretization step-size $0.01 sec$). 
The online time complexity of OT-DDF may be further reduced with the application of neural net compression techniques and parallel processing of particles. 
%Finally, it is worth noting that, as the dimension of the problem grows, we expect more resilience from the OT methods compared to the SIR method; see~\cite {al2023highdim} for more details.
\begin{table}[h]
    \centering
        \caption{The time complexity for one-time step.}
    \begin{tabular}{|c|c|c|c|c|}
    \hline
         Method &EnKF & SIR & OTPF & OT-DDF \\
         \hline
         time & $1.7 \times 10^{-4}$ & $2.0 \times 10^{-4}$ & $6.8 \times 10^{-2}$ & $1.5 \times 10^{-4}$\\
         \hline
    \end{tabular}
    \label{table:time_complexity}
\end{table}

    \section{Discussion}\label{sec:discussion}
    We introduced OT-DDF, a completely data-driven nonlinear filtering algorithm applicable to models that admit stationary processes and stable filters. The method provides significant improvement to the original OTPF method in terms of computational cost by limiting the costly training of an OT map to an offline stage using recorded data leading to very fast computations during online inference. Preliminary error analysis and numerical experiments show that in comparison to OTPF, the loss in accuracy is not significant when the window size is chosen properly, and the optimization problem is solved to a reasonable accuracy.  Future work will aim to refine the error analysis further and explore the algorithm's scalability and adaptability to a wider range of applications.

    \bibliographystyle{plain}
    \bibliography{TAC-OPT-FPF,references}

    % \appendix
    % \input{appendix.tex}
\end{document}